\documentclass{article}
\usepackage[utf8]{inputenc}
\usepackage{amsmath, amsthm, amssymb, amsfonts}
\usepackage{bm}
\usepackage{dsfont}
\usepackage[utf8]{inputenc}
\usepackage{rotate}
\usepackage{tikz}
\usepackage{tikz-cd}
\usepackage[arrow,matrix,curve]{xy} 	
\usepackage{xcolor}
\usepackage{todonotes}
\usepackage{alltt} 

\theoremstyle{plain}
\newtheorem{theorem}{Theorem}[section]
\newtheorem{proposition}[theorem]{Proposition}
\newtheorem{lemma}[theorem]{Lemma}

\theoremstyle{definition}

\newtheorem{remark}[theorem]{Remark}

\newcommand{\norm}[1]{\left\lVert#1\right\rVert}

\title{The Pair Correlation Function of Multi-Dimensional Low-Discrepancy Sequences with Small Stochastic Error Terms}
\author{Anja Schmiedt, Christian Wei\ss{}}
\date{\today}

\begin{document}

\maketitle

\begin{abstract}  In any dimension $d \geq 2$, there is no known example of a low-discrepancy sequence which possess Poisssonian pair correlations. This is in some sense rather surprising, because low-discrepancy sequences always have $\beta$-Poissonian pair correlations for all $0 < \beta < \tfrac{1}{d}$ and are therefore arbitrarily close to having Poissonian pair correlations (which corresponds to the case $\beta = \tfrac{1}{d}$). In this paper, we further elaborate on the closeness of the two notions. We show that $d$-dimensional Kronecker sequences for badly approximable vectors $\vec{\alpha}$ with an arbitrary small uniformly distributed stochastic error term generically have $\beta = \tfrac{1}{d}$-Poissonian pair correlations.
\end{abstract}

\section{Introduction}

According to a famous theorem of Weyl, \cite{Wey16}, a sequence $(\vec{x}_n)_{n \in \mathbb{N}}$ in $[0,1]^{d}$ is uniformly distributed if and only if for all vectors $\vec{r} \in \mathbb{Z}^d \setminus \{ \vec{0} \}$ it holds that
$$\lim_{N \to \infty} \frac{1}{N} \sum_{j=1}^N e(\langle \vec{r}, \vec{x}_{j}\rangle) = 0,$$
where $e(\cdot) := \exp(2 \pi i \cdot)$ and $\langle \cdot,\cdot\rangle$ denotes the $\ell^2$ scalar product on $\mathbb{R}^d$. The result established an important link between uniform distribution theory and exponential sums, which are a central tool in analytic number theory. A classical way to quantify the degree of uniformity of $(x_n)_{n \in \mathbb{N}}$ is the discrepancy, which is defined by
$$D_N(x_n) := \sup_{B \subset [0,1)^{d}} \left| \frac{1}{N}\# \left( \left\{x_i | 1 \leq i \leq N \right\} \cap B \right)
- \lambda_d(B) \right|,$$
where the supremum is taken over all boxes $B = [a,b) = [a_1,b_1) \times \ldots \times [a_d,b_d) \subset [0,1)^{d}$ and $\lambda_d(\cdot)$ denotes the $d$-dimensional Lebesgue measure. It is well-known that a sequence $(x_n)_{n \in \mathbb{N}}$ is uniformly distributed if and only if $D_N(x_n) \to 0$ for $N \to \infty$, compare \cite{Nie92}. The so-called Koksma-Erd\"os-Turan  inequality may be regarded as a quantitative version of Weyl's theorem and states that for any positive $m \in \mathbb{N}$ we have
$$D_N(x_n) \leq C_d \left(\frac{1}{m} + \sum_{0 < \norm{h}^{\infty} \leq m} \frac{1}{r(h)} \left| \frac{1}{N} \sum_{n=1}^N e(\langle h, x_n \rangle) \right|\right),$$
where $C_d$ is a constant that only depends on the dimension, $\norm{h}^\infty=\max_{1 \leq j \leq d} |h_j|$ and $r(h)= \prod_{j=1}^d \max(|h_j|,1)$, see \cite{KN74}, Chapter 2.\\[12pt]  
If a sequence $(x_n)_{n \in \mathbb{N}} \in [0,1)^{d}$ exhibits a discrepancy of order
\begin{equation*}
D_N(x_n) = \mathcal{O}(N^{-1}(\log N)^{d}),
\end{equation*}
where $\mathcal{O}(\cdot)$ denotes the Landau symbol, then it is called a low-discrepancy sequence. In fact, this is the best possible rate of convergence in dimension $d=1$ by the work of Schmidt, \cite{Sch72}, and it is widely conjectured that this is the optimal bound in arbitrary dimension. A wide variety of low-discrepancy sequences is known, ranging in dimension $d=1$ from more classical ones, like van der Corput sequences and Kronecker sequences, see e.g. \cite{KN74}, to more recent ones as the classes of examples in \cite{Car12} or \cite{Wei19}. In higher dimensions, there often exist multi-dimensional versions of the mentioned examples, see e.g. \cite{Nie92}. \\[12pt] 
In this paper, we are mainly interested in Kronecker sequences which are for $\alpha \in \mathbb{R} \setminus \mathbb{Q}$ defined by $x_n := \{ n \alpha\}$, where $\{ \cdot \}$ denotes the fractional part of a number. If $\vec{\alpha} \in \mathbb{R}^d$, then $\{ n \vec{\alpha} \}$ is defined component-wise.\\[12pt]
Another concept to quantify the degree of uniformity of a sequence $(x_n)_{n \in \mathbb{N}}$ was introduced by Rudnik and Sarnak for dimension $d=1$ in \cite{RS98} and later on generalized to the $d$-dimensional setting in \cite{HKL19}. It is based on the pair correlation function defined by
$$F_{N,d}(s) := \frac{1}{N} \# \left\{ 1 \leq k \neq l \leq N \ : \ \left\| x_k - x_l\right\|_{\infty} \leq \frac{s}{N^{1/d}} \right\},$$
where $\left\| \cdot \right\|$ is the distance of a number from its nearest integer in the $1$-dimensional setting and
$$\norm{x}_{\infty} := \max(\norm{x_1},\norm{x_2},\ldots,\norm{x_d}).$$ 
The sequence $(x_n)_{n \in \mathbb{N}}$ is said to have Poissionian pair correlations if
$$ \lim_{N \to \infty} F_{N,d}(s) = (2s)^d$$
for all $s> 0$. This definition was generalized by Nair and Policott in \cite{NP07} to $\beta$-Poissonian pair correlations in the one-dimensional setting and later on transferred to higher dimensions as well, see e.g. \cite{Wei22a} (where the case $\beta = \tfrac{1}{d}$ corresponds to the original Poissonian pair correlation property). A sequence $(x_n)_{n \in \mathbb{N}}$ in $[0,1)^{d}$ has $\beta$-PPC for $0 \leq \beta \leq \tfrac{1}{d}$ if
$$\lim_{N \to \infty} F_{N,d} ^\beta(s) := \lim_{N \to \infty} \frac{1}{N^{2-\beta}} \# \left\{ 1 \leq k \neq l \leq N \ : \ \norm{x_k - x_l}_{\infty} \leq \frac{s}{N^\beta} \right\} = (2s)^d$$
for all $s > 0$. In dimension $d=1$, a sequence with $\beta_1$-PPC also has $\beta_2$-PPC for all $0 \leq \beta_2 < \beta_1 < 1$ according to \cite{HZ21}, Theorem 4. Generalizing results from \cite{GL17} and \cite{ALP18}, it was shown in \cite{Ste20} that $\beta$-PPC imply uniform distribution for any $\beta \geq 0$. Vice versa, a sequence of independent, uniformly distributed random vectors $(\vec{X}_n)_{n \in \mathbb{N}}$ generically has $\beta$-PPC for all $0 \leq \beta \leq \tfrac{1}{d}$. In fact, an even stronger statement is known for low-discrepancy sequences.
\begin{theorem}[Theorem 1.1, \cite{Wei22a}] \label{thm:ldppc} Every low-discrepancy sequence $(x_n)_{n \in \mathbb{N}}$ in $[0,1]^d$ has $\beta$-PPC for all $0 \leq \beta < \tfrac{1}{d}$.
\end{theorem}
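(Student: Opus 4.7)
The plan is to apply the discrepancy bound pointwise, for every centre $x_k$, and then sum over $k$. Fix $s > 0$ and $\beta$ with $0 \le \beta < 1/d$. For each $k \in \{1,\dots,N\}$ set
$$B_k := \bigl\{\, y \in [0,1)^d : \norm{y - x_k}_\infty \le s/N^\beta \,\bigr\}.$$
Once $N$ is large enough that $s/N^\beta < 1/2$, $B_k$ is (modulo $1$) a union of at most $2^d$ axis-aligned rectangles with total Lebesgue measure $(2s)^d/N^{\beta d}$.

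Next, because any axis-aligned box in $[0,1)^d$ can be written as a signed combination of at most $2^d$ corner-anchored boxes of the form $[0,a_1) \times \cdots \times [0,a_d)$, the defining property of $D_N$ extends, at the cost of a dimensional constant $c_d$, to arbitrary axis-aligned (and torus-wrapped) boxes $B$:
$$\left| \frac{1}{N} \#\bigl\{\, 1 \le l \le N : x_l \in B \,\bigr\} - \lambda_d(B) \right| \;\le\; c_d\, D_N(x_n).$$
Applied to $B = B_k$ this yields
$$\#\bigl\{\, 1 \le l \le N : \norm{x_l - x_k}_\infty \le s/N^\beta \,\bigr\} \;=\; \frac{N (2s)^d}{N^{\beta d}} + O\bigl(N\, D_N(x_n)\bigr),$$
with an implicit constant depending only on $d$ and uniformly in $k$.

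Summing this identity over $k \in \{1,\dots,N\}$ and subtracting the $N$ diagonal contributions produces
$$\#\bigl\{\, 1 \le k \neq l \le N : \norm{x_k - x_l}_\infty \le s/N^\beta \,\bigr\} \;=\; N^{2-\beta d}(2s)^d + O\bigl(N^2 D_N(x_n)\bigr) - N.$$
Dividing by the normalisation $N^{2-\beta d}$ appropriate to the definition of $\beta$-PPC and inserting the low-discrepancy bound $D_N(x_n) = O(N^{-1}(\log N)^d)$, both the discrepancy error $O(N^{\beta d - 1}(\log N)^d)$ and the diagonal correction $O(N^{\beta d - 1})$ vanish in the limit because $\beta d < 1$. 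Hence $F_{N,d}^\beta(s) \to (2s)^d$ for every $s > 0$.

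The main obstacle is not the execution of the calculation but the fact that the estimate is essentially tight: at the endpoint $\beta = 1/d$ the error degrades to $O((\log N)^d)$, which cannot be absorbed for any fixed $s$. This threshold is precisely why the theorem has to stop short of $\beta = 1/d$ and why the paper devotes its main effort to recovering the endpoint only after smearing the Kronecker sequence by a stochastic perturbation. The only genuine technical subtlety, namely torus wrap-around of $B_k$ near the boundary, is handled by the same $2^d$-term inclusion–exclusion used to extend the discrepancy bound to arbitrary boxes.
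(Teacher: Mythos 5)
Your proof is correct and follows essentially the same route as the cited source: the paper itself only quotes this theorem from \cite{Wei22a}, and the proof there is exactly this discrepancy-based count of points in the (at most $2^d$-fold wrapped) boxes $B_k$ of volume $(2s)^d N^{-\beta d}$, summed over $k$, with the error $O\bigl(N^{\beta d-1}(\log N)^d\bigr)$ and the diagonal term $O(N^{\beta d -1})$ vanishing precisely because $\beta d<1$. Two minor remarks: your normalisation $N^{2-\beta d}$ is the right one (it makes $\beta=\tfrac{1}{d}$ coincide with ordinary PPC, as the paper asserts; the displayed $N^{2-\beta}$ in the paper is a typo), and your assumption $s/N^{\beta}<\tfrac{1}{2}$ for large $N$ silently requires $\beta>0$ or $s<\tfrac{1}{2}$ --- the boundary case $\beta=0$, $s\ge\tfrac{1}{2}$ is an artefact of the statement itself, since then no sequence can reach the limit $(2s)^d$.
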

However, the above mentioned examples of low-discrepancy in dimension one all do not have Poissonian pair correlations, see \cite{LS20} and \cite{WS22}. More generally, it seems to be challenging to find explicit examples of sequences with Poissonian pair correlations, and only few ones are known by now, see e.g. \cite{BMV15}, \cite{LST21}.\\[12pt] 
Theorem~\ref{thm:ldppc} may however be interpreted that low-discrepancy sequences fail to have $\tfrac{1}{d}$-PPC as closely as possible. For a class of Kronecker sequences, we further elaborate on this interpretation by showing that an arbitrarily small stochastic distortion of these sequences generically implies $\tfrac{1}{d}$-PPC, see Theorem~\ref{thm:main} below. Recall that a number $\alpha \in \mathbb{R}$ is called badly approximable if there exists a $c = c_{\alpha} > 0$ such that for all $p \in \mathbb{Z}, q \in \mathbb{Z} \setminus \{0\}$ it holds that
$$\left| \alpha - \frac{p}{q} \right| > \frac{c}{q^2}$$
or equivalently
$$\liminf_{q \to \infty} \left\| q\alpha  \right\| = c > 0.$$
It is a standard result from analytic number theory that $\alpha$ is badly approximable if and only if all partial quotients in the continued fraction expansion of $\alpha$ are bounded.
\begin{theorem} \label{thm:main} Let $\vec{\alpha} \in \mathbb{R}^d$ be a vector consisting of (over $\mathbb{Q}$) linearly independent, badly approximable numbers and let $(\vec{X}_i)_{i \in \mathbb{N}}$ be a sequence of independent, identically distributed random vectors which are uniformly distributed on $[0,1]^d$. Furthermore let $\varepsilon > 0$ be arbitrary. Then the sequence of random vectors $(\vec{Y}_n)_{n \in \mathbb{N}} := \left(\left\{n \vec{\alpha} + \varepsilon \vec{X}_n\right\}\right)_{n \in \mathbb{N}}$ generically  has Poissonian pair correlations.
\end{theorem}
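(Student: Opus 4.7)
The plan is to apply the standard first-and-second moment method to the random variable $F^{1/d}_{N,d}(s)$: compute $\mathbb{E}[F^{1/d}_{N,d}(s)]$ and show it converges to $(2s)^d$, bound $\mathrm{Var}(F^{1/d}_{N,d}(s))$, and conclude via Chebyshev's inequality and Borel--Cantelli along a sparse subsequence. A sandwich/monotonicity argument in $s$ then upgrades this to almost-sure convergence for all $N$ and all $s>0$ simultaneously.

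\textbf{Expectation.} Writing $F^{1/d}_{N,d}(s)=\tfrac{1}{N}\sum_{k\ne l} Z_{kl}$ with $Z_{kl}=\mathbf{1}\{\norm{\vec{Y}_k-\vec{Y}_l}_\infty\le s/N^{1/d}\}$, the key observation is that $\vec{Y}_k-\vec{Y}_l\equiv (k-l)\vec\alpha+\varepsilon(\vec X_k-\vec X_l)\pmod{1}$ and that the coordinates of $\vec X_k-\vec X_l$ are independent with the (Lipschitz) tent density $h_\varepsilon$ on $[-\varepsilon,\varepsilon]$. Hence $\mathbb{E}[Z_{kl}]$ factors over coordinates, and a direct integration gives
$$\mathbb{E}[Z_{kl}]=\Bigl(\frac{2s}{N^{1/d}}\Bigr)^{d}\prod_{j=1}^{d}\tilde h_\varepsilon((k-l)\alpha_j)+O(N^{-(d+1)/d}),$$
where $\tilde h_\varepsilon$ is the torus-wrapped tent (with total mass $1$). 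Grouping by $m=k-l$,
$$\mathbb{E}[F^{1/d}_{N,d}(s)]=\frac{(2s)^d}{N^{2}}\sum_{0<|m|<N}(N-|m|)\prod_{j=1}^{d}\tilde h_\varepsilon(m\alpha_j)+o(1).$$
By $\mathbb{Q}$-linear independence of the $\alpha_j$, Weyl's theorem yields equidistribution of $(m\vec\alpha)_{m\ge1}$ on $[0,1]^d$, so the continuous $g(\vec\theta)=\prod_j\tilde h_\varepsilon(\theta_j)$ satisfies $\tfrac{1}{N}\sum_{m\le N}g(m\vec\alpha)\to\int g=1$; the Fej\'er-weighted average converges to the same limit, giving $\mathbb{E}[F^{1/d}_{N,d}(s)]\to(2s)^d$.

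\textbf{Variance.} When $\{k,l\}\cap\{k',l'\}=\emptyset$, the indicators $Z_{kl}$ and $Z_{k'l'}$ depend on disjoint sets of the independent $\vec X_n$, hence are independent and contribute nothing to the covariance. Only the $O(N^3)$ pairs sharing exactly one index remain; for such a pair, conditioning on the shared $\vec X_k$ factorises the conditional probability, and each conditional probability $\mathbb{P}[Z_{kl}=1\mid\vec X_k=x]$ is bounded by $C_{\varepsilon,d}/N$ (the allowed $\vec X_l$ lies in a union of $O_\varepsilon(1)$ boxes of side $O(1/N^{1/d})$). Therefore $\mathrm{Cov}(Z_{kl},Z_{kl'})=O(N^{-2})$, and summing yields $\mathrm{Var}(F^{1/d}_{N,d}(s))=O(1/N)$.

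\textbf{Conclusion and main obstacle.} Chebyshev plus Borel--Cantelli along a subsequence $N_j$ with $\sum 1/N_j<\infty$ and $N_{j+1}/N_j\to 1$ produces almost-sure convergence along the subsequence for each fixed $s$; monotonicity of $F^{1/d}_{N,d}(\cdot)$ in $s$ then extends the convergence to a countable dense set of $s>0$ and hence, by sandwiching, to all $s>0$ and all $N$. The delicate point is making the expectation convergence quantitative enough, with an error uniform in $s$ on compact sets, to match the subsequence step. This is where the bad-approximability hypothesis is used: together with $\mathbb{Q}$-linear independence, it supplies Diophantine lower bounds on the denominators $\norm{\langle\vec n,\vec\alpha\rangle}$ controlling the Weyl sums of the Fourier modes of $g$, which via Koksma--Erd\H{o}s--Tur\'an yields a polynomial rate of equidistribution for $(m\vec\alpha)$ and hence an explicit error $O(N^{-\eta})$ in the Fej\'er-weighted Birkhoff sum above.
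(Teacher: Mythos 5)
Your argument is essentially correct, but it takes a genuinely different route from the paper, most notably in the variance step. The paper expands $F_{N,d}(s)$ via Poisson summation, evaluates the expected exponentials for every coincidence pattern of the indices $k,l,m,n$ (Lemma~\ref{lem:integral}), and controls the deterministic Kronecker part through exponential-sum estimates resting on bad approximability (Lemmas~\ref{lem:upperbound}--\ref{lem:shortsums}), arriving at a variance bound of order $N^{-1/2}$. You instead bound the covariances directly in physical space: pairs $(k,l)$, $(k',l')$ with disjoint index sets give exactly independent indicators; pairs sharing one index have covariance $O(N^{-2})$ by conditional independence given the shared $\vec{X}$ together with the uniform bound $\mathbb{P}(Z_{kl}=1\mid \vec{X}_k=x)\le C_{\varepsilon,d,s}/N$, which holds because $\varepsilon \vec{X}_l$ reduced mod $1$ has density bounded in terms of $\varepsilon$ only; and the $O(N^2)$ pairs with equal index sets (which your sketch omits, but which are harmless) contribute at most $\mathbb{E}[Z_{kl}]\le C/N$ each. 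This yields $\mathrm{Var}(F_{N,d}(s))=O(1/N)$, simpler and sharper than the paper's bound. Your expectation step parallels Proposition~\ref{prop:expected} but replaces Koksma--Hlawka and the low-discrepancy property by Weyl equidistribution of $(m\vec{\alpha})$ plus a Ces\`aro/Fej\'er argument. A striking consequence is that your variance estimate never uses bad approximability; only equidistribution of $(m\vec{\alpha})$ (i.e.\ $\mathbb{Q}$-linear independence of $1,\alpha_1,\dots,\alpha_d$) enters, through the mean. So, if the covariance counting is as robust as it appears (I see no flaw in it), your argument would in fact cover arbitrary equidistributed --- in particular arbitrary low-discrepancy --- base sequences, which is precisely the generalization the paper states only as a conjecture; the paper's Fourier route buys explicit control of the Kronecker exponential sums but pays for it with the badly-approximable hypothesis.

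Two remarks on your closing paragraph. First, no $s$-uniform quantitative rate for $\mathbb{E}[F_{N,d}(s)]$ is needed: the Borel--Cantelli and sandwich steps only ever invoke countably many values of $s$ (a dense set together with rational dilations $s\sigma$, $\sigma>1$, to absorb the ratio $(N_{j+1}/N_j)^{1/d}$), so qualitative convergence of the mean at each fixed $s$ suffices. Second, the Diophantine input you invoke there is not justified as stated: componentwise bad approximability of $\alpha_1,\dots,\alpha_d$ does not yield lower bounds on $\left\|\langle \vec{n},\vec{\alpha}\rangle\right\|$ for integer vectors $\vec{n}$ when $d\ge 2$ (that would require $\vec{\alpha}$ to be badly approximable as a linear form, which does not follow from the componentwise assumption). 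Fortunately neither ingredient is actually needed, so this affects only your commentary on where the hypothesis enters, not the validity of the argument.
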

In our view, Theorem~\ref{thm:main} yields new insight in three different ways: First, it adds a new aspect to the interpretation that certain Kronecker sequences (as prominent examples of low-discrepancy sequences) are as close as possible to having Poissonian pair correlations. Second, it constitutes the first non-trivial examples of a pair of two sequences in arbitrary dimension which both do not posses $\tfrac{1}{d}$-PPC but their sum does.\footnote{A trivial example would be the following: consider a sequence $(x_n)_{n \in \mathbb{N}}$ with $\tfrac{1}{d}$-PPC and split it up into two sequences $(y_n)_{n \in \mathbb{N}}, (z_n)_{n \in \mathbb{N}}$, where $y_n$ is equal to $x_n$ for even indices and $z_n$ is equal to $x_n$ for odd indices while all other elements of $y_n$ and $z_n$ are $\vec{0}.$} Third, our proof of Theorem~\ref{thm:main} in Section~\ref{sec:sums}, whose structure is inspired by the ones of \cite{Mar07}, Theorem 2.3, and \cite{HKL19}, Theorem 1, strongly refines the technique used in the mentioned references. Indeed, we even conjecture that Theorem~\ref{thm:main} might hold for any low-discrepancy sequence in arbitrary dimension for the following two reasons: in dimension $d=1$, Theorem~\ref{thm:main} is a special instance of Theorem~1.2 in \cite{LR21} which covers a wider class of examples in the one-dimensional setting. Second, despite that the proof of Theorem~\ref{thm:main} relies on the properties of Kronecker sequences and badly approximable numbers, the conjecture seems to be reasonable, because we can show a relevant intermediate step of the proof for arbitrary low-discrepancy sequences. As this result is of independent interest, we state it here separately.
\begin{proposition} \label{prop:expected} Let $(\vec{z}_n)_{n \in \mathbb{N}}$ be a low-discrepancy sequence in $[0,1]^d$, $\varepsilon > 0$ and $(\vec{X}_n)_{n \in \mathbb{N}}$ be a sequence of independent random vectors which are uniformly distributed on $[0,1]^{d}$. For arbitrary $s>0$, let $F_{N,d}(s)$ be the pair correlation function of $(\vec{Y}_n)_{n \in \mathbb{N}} = (\{\vec{z}_n + \varepsilon \vec{X}_n\})_{n \in \mathbb{N}}$. Then the expected value $\mathbb{E}[F_{N,d}(s)]$ converges to $(2s)^d$ for $N \to \infty$.
\end{proposition}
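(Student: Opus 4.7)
The plan is to pass to Fourier analysis on the torus $\mathbb{T}^d = \mathbb{R}^d/\mathbb{Z}^d$, which converts the independence of the noise into a factored characteristic function and turns the whole expectation into a bilinear form in Weyl sums of $(\vec{z}_n)$. Writing $r = s/N^{1/d}$ and letting $g_r : \mathbb{T}^d \to \{0,1\}$ denote the periodization of $\mathds{1}_{[-r,r]^d}$, the event $\|\vec{Y}_k - \vec{Y}_l\|_\infty \leq r$ is exactly $g_r(\vec{z}_k - \vec{z}_l + \varepsilon(\vec{X}_k - \vec{X}_l)) = 1$. By independence of the $\vec{X}_n$ and Fubini,
\[
\mathbb{E}[F_{N,d}(s)] = \frac{1}{N}\sum_{k \neq l}(g_r * \tilde{\phi}_\varepsilon)(\vec{z}_k - \vec{z}_l),
\]
where $\tilde{\phi}_\varepsilon$ is the torus density of $\varepsilon(\vec{X}_1 - \vec{X}_2) \bmod \mathbb{Z}^d$, whose Fourier coefficients are $|\hat{\chi}(\varepsilon \vec{h})|^2$ with $\hat{\chi}(\vec{\xi}) = \prod_j (e^{2\pi i \xi_j}-1)/(2\pi i \xi_j)$ the characteristic function of the uniform law on $[0,1]^d$. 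I would then expand $g_r * \tilde{\phi}_\varepsilon$ (which is continuous with an absolutely convergent Fourier series) and interchange summation orders to obtain
\[
\mathbb{E}[F_{N,d}(s)] = \sum_{\vec{h}\in\mathbb{Z}^d} \hat{g}_r(\vec{h})\,|\hat{\chi}(\varepsilon\vec{h})|^2\,\frac{|S_N(\vec{h})|^2 - N}{N},\qquad S_N(\vec{h}) := \sum_{n=1}^N e(\langle \vec{h}, \vec{z}_n\rangle).
\]

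The zero-frequency term $\vec{h}=\vec{0}$ alone contributes $(2r)^d(N-1) = (2s)^d(1 - 1/N)$, already producing the claimed limit $(2s)^d$. It thus remains only to show that the tail $\vec{h} \neq \vec{0}$ is $o(1)$. Three ingredients combine here: (i) the trivial bound $|\hat{g}_r(\vec{h})| \leq (2r)^d = (2s)^d/N$; (ii) the absolute summability $\sum_{\vec{h} \in \mathbb{Z}^d}|\hat{\chi}(\varepsilon \vec{h})|^2 < \infty$, which is immediate from $|\hat{\chi}(\varepsilon\vec{h})|^2 = \prod_{j=1}^d \sin^2(\pi\varepsilon h_j)/(\pi\varepsilon h_j)^2$; and (iii) Weyl's criterion, which, because $(\vec{z}_n)$ is uniformly distributed, gives $|S_N(\vec{h})|^2/N^2 \to 0$ pointwise for every $\vec{h} \neq \vec{0}$. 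Combining (i)--(iii), the tail is bounded in modulus by
\[
(2s)^d \sum_{\vec{h} \neq \vec{0}}|\hat{\chi}(\varepsilon\vec{h})|^2\left(\frac{|S_N(\vec{h})|^2}{N^2} + \frac{1}{N}\right),
\]
and the dominated convergence theorem on $\mathbb{Z}^d$ (with dominating function $|\hat{\chi}(\varepsilon\cdot)|^2$) makes this vanish as $N \to \infty$.

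The hard part is precisely this tail estimate: a pointwise bound on $\hat{g}_r$ alone is useless since $\sum_{\vec{h}}|\hat{g}_r(\vec{h})|$ diverges, and one genuinely needs the smoothing factor $|\hat{\chi}(\varepsilon \vec{h})|^2$---that is, the presence of the noise $\varepsilon \vec{X}_n$---to justify bringing the limit inside the $\vec{h}$-sum. Note that the argument uses only the uniform distribution of $(\vec{z}_n)$ via Weyl's criterion; the stronger low-discrepancy hypothesis is natural in context but not strictly needed for convergence of the expectation, which is consistent with the authors' conjecture that the ideas of Proposition~\ref{prop:expected} should extend to all low-discrepancy sequences.
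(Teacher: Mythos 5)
Your argument is correct, and it takes a genuinely different route from the paper. The paper works entirely in physical space: it writes out the explicit triangular density of $\varepsilon(\vec{X}_k-\vec{X}_l)$, integrates it over the box of side $2s/N^{1/d}$ centred at $\vec{z}_m-\vec{z}_l$, and is then left with a Riemann-type sum $\frac{1}{N}\sum_m \prod_i\bigl(1-\tfrac{z_m^{(i)}-z_l^{(i)}}{\varepsilon}\bigr)$ over the nearby $\vec{z}_m$, which it compares to the corresponding integral via the Koksma--Hlawka inequality; the low-discrepancy hypothesis then gives a quantitative error $\mathcal{O}(\log N/N)$, uniformly in $l$. You instead pass to the frequency side: the noise turns into the summable weight $|\hat\chi(\varepsilon\vec{h})|^2=\prod_j \sin^2(\pi\varepsilon h_j)/(\pi\varepsilon h_j)^2$, the zero mode alone produces $(2s)^d(1-1/N)$, and the tail is killed by the trivial bound $|\hat g_r(\vec{h})|\le (2s)^d/N$, Weyl's criterion and dominated convergence. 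Your interchange of sums is justified (the coefficients $\hat g_r(\vec{h})|\hat\chi(\varepsilon\vec{h})|^2$ decay like $\prod_{h_j\neq 0}|h_j|^{-3}$, so the series is absolutely and uniformly convergent, and the convolution is continuous), modulo the harmless proviso that $2s/N^{1/d}\le 1$ for $N$ large so that $\hat g_r(\vec{0})=(2s)^d/N$. What each approach buys: yours isolates exactly where the noise is indispensable (summability of $|\hat\chi(\varepsilon\cdot)|^2$), avoids the wrap-around/box bookkeeping, and shows that mere uniform distribution of $(\vec{z}_n)$ suffices for the convergence of the expectation, which sharpens the statement; the paper's computation, on the other hand, delivers an explicit rate of convergence under the low-discrepancy hypothesis, which is the form that feeds naturally into the Chebyshev estimate in the proof of the main theorem, whereas Weyl's criterion alone gives no rate.
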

The subsequent sections are organized as follows: in Section~\ref{sec:preparatory_results}, we collect some rather general results in four lemmas. They are all important for proving Theorem~\ref{thm:main} but can be formulated separately and are used several times in the remainder of this paper. Afterwards, we give the rather lengthy proof of Theorem~\ref{thm:main} (and of Proposition~\ref{prop:expected}) in Section~\ref{sec:sums}.
\paragraph{Acknowledgment.} The authors would like to thank Stefan Steinerberger for fruitful discussions on important aspects of this paper.

\section{Preparatory Results} \label{sec:preparatory_results}

In this section, we collect the mainly technical results which we need to prove our main theorem. Since we are in our context not interested in numerically optimal bounds, we do not try to optimize the constants here. This might be up for future research. The first lemma calculates an expected value which will play a central role in the remainder of this paper and establishes the main difference and challenge in comparison to \cite{Mar07} and \cite{HKL19} due to the involvement of $\sin$-terms. It suffices to only consider the one-dimensional case here.

\begin{lemma} \label{lem:integral} Let $r,r' \in \mathbb{Z} \setminus \{ 0 \}$. Furthermore let $(X_n)_{n \in \mathbb{N}}$ be a sequence of independent, identically distributed random variables, which are uniformly distributed on $[0,1]$. In the case $r \neq \pm r'$ it follows for $k,l,m,n \in \mathbb{N}$ with $k \neq l$ and $m \neq n$ and any $\varepsilon > 0$ that
\begin{align*}\mathbb{E}[e&(r\varepsilon(X_k-X_l) +  r' \varepsilon(X_m-X_n))]
\\ & = \begin{cases} \frac{1}{(r+r')\pi \varepsilon} \sin((r+r')\pi \varepsilon) \frac{1}{r \varepsilon} \sin(r \pi \varepsilon) \frac{1}{r' \varepsilon} \sin(r' \pi \varepsilon) & \text{if} \ k = m,  l \neq n\\
\frac{1}{(r+r')\pi \varepsilon} \sin((r+r')\pi \varepsilon) \frac{1}{r \varepsilon} \sin(r \pi \varepsilon) \frac{1}{r' \varepsilon} \sin(r' \pi \varepsilon)  & \text{if} \ k  \neq m,  l = n\\
\frac{1}{(r-r')\pi \varepsilon} \sin((r-r')\pi \varepsilon) \frac{1}{r \varepsilon} \sin(r \pi \varepsilon) \frac{1}{r' \varepsilon} \sin(r' \pi \varepsilon)& \text{if} \ k = n, l \neq m\\
\frac{1}{(r-r')\pi \varepsilon} \sin((r-r')\pi \varepsilon) \frac{1}{r \varepsilon} \sin(r \pi \varepsilon) \frac{1}{r' \varepsilon} \sin(r' \pi \varepsilon) & \text{if} \ k \neq n, l = m\\
\frac{1}{((r+r`)\pi\varepsilon)^2} \sin((r+r')\pi\varepsilon)^2 & \text{if} \ k  = m,  l = n\\
\frac{1}{((r-r`)\pi\varepsilon)^2} \sin((r-r')\pi\varepsilon)^2 & \text{if} \ k=n, l = m\\
\frac{1}{(r \pi \varepsilon)^2} \sin(r\pi \varepsilon)^2 \frac{1}{(r' \pi \varepsilon)^2} \sin(r'\pi \varepsilon)^2 & \text{else}.
\end{cases}
\end{align*}
If $r=r'$, then all expressions of the form $\frac{1}{ (r-r') \pi \varepsilon} \sin((r-r')\pi \varepsilon)$ need to be replaced by $1$ in the formulae above. Similarly, if $r=-r'$, then all expressions of the form $\frac{1}{ (r+r') \pi \varepsilon} \sin((r+r')\pi \varepsilon)$ need to be replaced by $1$.
\end{lemma}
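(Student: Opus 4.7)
The proof is a direct computation that rests on three elementary facts: (i) the functional equation $e(a+b) = e(a)e(b)$; (ii) independence of the family $(X_n)_{n \in \NN}$, so that an expectation of a product of factors $e(c_j X_{i_j})$ over pairwise distinct indices splits as the corresponding product of expectations; and (iii) the one-variable integral
$$\mathbb{E}[e(cX)] \;=\; \int_0^1 e^{2\pi i c x}\, dx \;=\; \begin{cases} 1, & c = 0, \\ e^{\pi i c}\,\dfrac{\sin(\pi c)}{\pi c}, & c \neq 0, \end{cases}$$
valid for every $c \in \RR$ and every $X$ uniform on $[0,1]$.

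Given these, my strategy is a case analysis driven by the coincidences among $k,l,m,n$ (subject to $k \neq l$ and $m \neq n$). For each of the seven cases I rewrite the exponent $r\varepsilon(X_k-X_l) + r'\varepsilon(X_m-X_n)$ by grouping the coefficient of every surviving independent random variable, then apply (i)-(iii). For instance, when $k=m$ but $l \neq n$ the exponent becomes $(r+r')\varepsilon X_k - r\varepsilon X_l - r'\varepsilon X_n$ in three independent variables, so the expectation factors as $\mathbb{E}[e((r+r')\varepsilon X)]\cdot \mathbb{E}[e(-r\varepsilon X)]\cdot \mathbb{E}[e(-r'\varepsilon X)]$, each factor of which is evaluated by (iii). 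The three remaining three-index cases are entirely symmetric; the two cases $k=m, l=n$ and $k=n, l=m$ collapse the exponent to two independent variables with opposite coefficients $(r+r')\varepsilon$ or $(r-r')\varepsilon$ and therefore yield the squared $\mathrm{sinc}$-expressions; and the ``else'' case with all four indices distinct splits into four one-variable expectations that recombine into the product of the two squared $\sin$-terms recorded in the final branch.

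Two structural features explain why the final answers are real and why signs are absent. First, in every case the coefficients of the surviving random variables sum to $0$, since each difference $X_k - X_l$ and $X_m - X_n$ already contributes a zero-sum coefficient vector and identifications preserve this; hence the product of the imaginary phase factors $e^{\pi i c}$ telescopes to $1$. Second, $c \mapsto \sin(\pi c)/(\pi c)$ is even, so only $|r|$, $|r'|$ and $|r \pm r'|$ enter the modulus. The only delicate point is the degenerate situation $r = -r'$ (so $r+r'=0$) or $r=r'$ (so $r-r'=0$), where the relevant one-variable expectation falls into the first branch of (iii) and equals $1$ rather than the indeterminate $\sin(0)/0$; this is exactly the substitution rule stated at the end of the lemma. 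I do not foresee any genuine obstacle: the work is bookkeeping across the seven cases and being careful not to divide by zero in the boundary situations.
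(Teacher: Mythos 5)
Your proposal is correct and follows essentially the same route as the paper: factor the expectation by independence into one-dimensional integrals $\int_0^1 e(cx)\,dx = e^{\pi i c}\sin(\pi c)/(\pi c)$ and run through the coincidence patterns of $k,l,m,n$, with the $r=\pm r'$ degeneracies handled by the $c=0$ branch. Your observation that the phase factors cancel because the coefficients of the surviving variables always sum to zero is a cleaner packaging of the paper's explicit angle-sum manipulation, but it is the same computation.
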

\begin{proof} In the cases $k \neq m, l \neq n$ and $k=m, l=n$, the result follows by simple integration and trigonometrical arguments. If $k=m, l \neq n$, integration leads to
\begin{align*}
    \mathbb{E}[e&(r \varepsilon(X_k-X_l) +  r' \varepsilon(X_m-X_n))] = \\
    & \frac{\sin((r+r') \pi \varepsilon}{(r+r') \pi \varepsilon} \left( \cos((r+r') \pi \varepsilon) + i \sin((r+r') \pi \varepsilon) \right)\\
    & \times \frac{\sin(r \pi \varepsilon)}{r \pi \varepsilon}\left( \cos(r \pi \varepsilon) + i \sin(r \pi \varepsilon) \right)\\
    & \times \frac{\sin(r' \pi \varepsilon)}{r' \pi \varepsilon}\left( \cos(r' \pi \varepsilon) + i \sin(r' \pi \varepsilon) \right)
\end{align*}
Using the standard angel sum equations $\sin(x+y) = \sin(x) \cos(y) + \sin(y) \cos(x)$ and $\cos(x+y) = \cos(x)\cos(y) - \sin(x)\sin(y)$, we arrive the assertion follows. The remaining cases follow similarly.
\end{proof}
\begin{remark} \label{rem:doublesequence} If $r'=0$ and $r \neq 0$, then the expected value is equal to $\frac{\sin(r\pi\varepsilon)^2}{(r\pi\varepsilon)^2}$.
\end{remark}
There will also appear exponential sums of the Kronecker part of the sequence. This is also the point where we make use of the fact that $\vec{\alpha}$ consists of badly approximable components.
\begin{lemma} \label{lem:upperbound} For all vectors $\vec{\alpha} \in \mathbb{R}^{d}$ consisting of linearly independent, badly approximable numbers and $0 \leq \delta \leq \tfrac{1}{2}$, there exists $C_{\vec{\alpha},\delta} > 0$ such that for all $N \in \mathbb{N}$ and $\vec{r} \in \mathbb{N}^d$ we have
$$\left| \sum_{j=1}^N e(j\langle \vec{r},\vec{\alpha}\rangle) \right| \leq C_{\vec{\alpha},\delta} \min_{1 \leq i \leq d} r_i^{1/2-\delta}N^{1/2+\delta}$$
\end{lemma}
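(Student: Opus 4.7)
The plan is to evaluate the sum as a geometric series, derive a Diophantine lower bound on $\|\langle \vec{r}, \vec{\alpha}\rangle\|$ from the badly-approximable hypothesis, and then interpolate via a geometric-mean inequality to produce the exponent $\delta$. Setting $\theta := \langle \vec{r}, \vec{\alpha}\rangle$, the explicit geometric-series evaluation together with the standard inequality $|1 - e(x)| = 2|\sin(\pi x)| \geq 4\|x\|$ yields the two a priori bounds
$$\left|\sum_{j=1}^N e(j\theta)\right| \leq N \qquad \text{and} \qquad \left|\sum_{j=1}^N e(j\theta)\right| \leq \frac{1}{4\|\theta\|}.$$

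Using the elementary inequality $\min(A,B) \leq A^{1/2-\delta} B^{1/2+\delta}$, valid for $A,B>0$ and $0 \leq \delta \leq 1/2$, the problem reduces to producing, for each coordinate $i \in \{1,\ldots,d\}$, a bound of the form $|S_N(\vec{r})| \leq C_{\vec{\alpha}} r_i$, or equivalently a Diophantine lower bound $\|\theta\| \geq c_{\vec{\alpha}}/r_i$. In dimension $d=1$ this is nothing but the defining property $\|r\alpha\| \geq c_\alpha / r$ of a badly approximable number. For $d \geq 2$ I would split $\theta = r_i \alpha_i + \sum_{j \neq i} r_j \alpha_j$ and combine the bad approximability of $\alpha_i$ with the $\mathbb{Q}$-linear independence of $\{1,\alpha_1,\ldots,\alpha_d\}$ in order to preclude the remaining sum from cancelling $r_i \alpha_i$ closer than $c_{\vec{\alpha}}/r_i$ modulo one.

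Once this Diophantine estimate is in place, the claim of the lemma follows immediately: writing $|S_N| = |S_N|^{1/2-\delta} \cdot |S_N|^{1/2+\delta}$, inserting $|S_N| \leq C_{\vec{\alpha}} r_i$ in the first factor and $|S_N| \leq N$ in the second yields $|S_N| \leq (C_{\vec{\alpha}} r_i)^{1/2-\delta} N^{1/2+\delta}$, after which we take the minimum over $i$. The main obstacle is precisely the multi-dimensional Diophantine step: componentwise bad approximability of each $\alpha_i$ does not by itself control a full linear form $\langle \vec{r}, \vec{\alpha}\rangle$ in terms of a single coordinate $r_i$, so the $\mathbb{Q}$-linear independence hypothesis has to be used carefully to rule out accidental cancellations in $\sum_{j} r_j \alpha_j$; the freedom to choose the index $i$ (together with the restriction $\vec{r} \in \mathbb{N}^d$) should provide enough room for such an argument.
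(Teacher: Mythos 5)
Your reduction --- the two a priori bounds $\left|\sum_{j=1}^N e(j\theta)\right| \le N$ and $\left|\sum_{j=1}^N e(j\theta)\right| \le \frac{1}{4\|\theta\|}$ for $\theta = \langle \vec{r},\vec{\alpha}\rangle$, followed by the interpolation $\min(A,B)\le A^{1/2-\delta}B^{1/2+\delta}$ --- is exactly the skeleton of the paper's argument, which likewise treats $\min_i r_i\ge N$ trivially and otherwise aims at the stronger estimate $\left|\sum_{j=1}^N e(j\theta)\right|\le C_{\vec{\alpha}}\min_i r_i$ via the geometric series. But the one step you did not carry out, the Diophantine lower bound $\|\langle\vec{r},\vec{\alpha}\rangle\|\ge c_{\vec{\alpha}}/r_i$ for every $i$ (equivalently $\ge c_{\vec{\alpha}}/\min_i r_i$), is the entire content of the lemma in dimension $d\ge 2$, and your proposal only gestures at it. This is a genuine gap, not a routine detail: $\mathbb{Q}$-linear independence is a purely qualitative hypothesis and cannot produce a uniform quantitative constant. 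Concretely, take $d=2$ and $\vec{r}=(1,r_2)$: then $\min_i r_i=1$, and the bound you need would force $\|\alpha_1+r_2\alpha_2\|\ge c_{\vec{\alpha}}>0$ uniformly in $r_2$, which is impossible because $(r_2\alpha_2 \bmod 1)_{r_2\in\mathbb{N}}$ is equidistributed, hence dense, in $[0,1)$ --- no matter how badly approximable and $\mathbb{Q}$-linearly independent $\alpha_1,\alpha_2$ are. So the ``freedom to choose the index $i$'' cannot rescue the estimate at the minimizing coordinate, and the step you yourself flagged as the main obstacle would fail as stated; any true lower bound for the linear form must involve the whole vector $\vec{r}$ (for instance via a badly-approximable-system condition), not just its smallest entry.

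For comparison, the paper's own proof passes through precisely this point by asserting $\left|1-e(\langle\vec{r},\vec{\alpha}\rangle)\right|\ge \sin\bigl(2\pi\tilde{C}_{\vec{\alpha}}\sum_{i=1}^d 1/r_i\bigr) > \tilde{C}_{\vec{\alpha}}\pi\sum_{i=1}^d 1/r_i$ directly from the componentwise inequalities $\|r_i\alpha_i\|\ge \tilde{C}_{\vec{\alpha}}/r_i$, i.e.\ it dismisses the possible cancellation between the terms $r_i\alpha_i$ modulo one at exactly the juncture where you (rightly) located the difficulty. So you have identified the true crux of the lemma, but your proposal does not prove it, and the route you sketch --- componentwise bad approximability plus $\mathbb{Q}$-linear independence implying $\|\langle\vec{r},\vec{\alpha}\rangle\|\gtrsim 1/\min_i r_i$ --- cannot be completed in the form you describe.
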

\begin{proof} Since the left hand side is always $\leq N$, the inequality holds automatically for $\min_{1 \leq i \leq d} r_i \geq N$. In order to prove the assertion for $\min_{1 \leq i \leq d} r_i < N$, we show the stronger inequality
$$\left| \sum_{j=1}^N e(j\langle\vec{r},\vec{\alpha}\rangle \right| \leq C_{\vec{\alpha}} \min_{1 \leq i \leq d} r_i.$$
We have
$$\left| \sum_{j=1}^N e(j\langle\vec{r},\vec{\alpha}\rangle) \right|  \leq \frac{2}{\left| 1 - \exp(2\pi i\langle\vec{r},\vec{\alpha}\rangle) \right|}.$$
As $\vec{\alpha}$ consists of badly approximable numbers, there exists a $\frac{1}{\sqrt{5}} \geq \tilde{C}_{\vec{\alpha}} > 0$ with $$\left|r_i \alpha_i - p\right| \geq \tilde{C}_{\vec{\alpha}} \frac{1}{r_i}, \quad 1 \leq i \leq d,$$
for all $r_i \in \mathbb{N}$ and $p \in \mathbb{Z}$. We may without loss of generality assume that even $\frac{1}{4d} > \tilde{C}_{\vec{\alpha}}$. Therefore,
$$\left|(1-\exp(2\pi i\langle\vec{r},\vec{\alpha}\rangle) \right| \geq \sin(2 \pi \tilde{C}_{\vec{\alpha}}\sum_{i=1}^d 1 / r_i) > \tilde{C}_{\vec{\alpha}} \pi \sum_{i=1}^d \frac{1}{r_i}$$
which implies the claim with $C_{\vec{\alpha}} = \frac{1}{\pi \tilde{C}_{\vec{\alpha}}}$. 
\end{proof}
Combining these results, we get the following inequality in dimension $d=1$, which will be used in the proof of Theorem~\ref{thm:main}.
\begin{lemma} \label{lem:doublesum} 
For all $0 < \delta \leq \tfrac{1}{2}$, all $\varepsilon > 0$ and all badly approximable $\alpha \in \mathbb{R}$ we have
\begin{align*}
    &  \sum_{r \in \mathbb{Z} \setminus \{ 0 \}} \left| \sum_{k=1}^N \sum_{\substack{l=1,\\ l \neq k}}^N E[e(r(\varepsilon(X_k-X_l) + k\alpha - l \alpha))]\right|\\
    & \leq C_{\alpha,\delta,\varepsilon} N^{1+\delta} 
    \end{align*}
for some $C_{\alpha,\delta,\varepsilon} \in \mathbb{R}$ which only depends on $\alpha,\delta$ and $\varepsilon$ but not on $r$.
\end{lemma}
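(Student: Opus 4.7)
The plan is to split the expectation into its purely deterministic Kronecker part and its purely stochastic part, reduce each to an object we already control, and then absorb the $r$-dependence into an absolutely convergent series. First, since the $X_k$ are i.i.d.\ and independent of the deterministic shifts $k\alpha$, $l\alpha$, and since $k \neq l$, the expectation factors as
\begin{align*}
E\bigl[e\bigl(r\varepsilon(X_k-X_l) + r(k-l)\alpha\bigr)\bigr] = e(r(k-l)\alpha)\,\bigl|E[e(r\varepsilon X_1)]\bigr|^2 = e(r(k-l)\alpha)\cdot\frac{\sin^2(r\pi\varepsilon)}{(r\pi\varepsilon)^2},
\end{align*}
the last identity being Remark~\ref{rem:doublesequence}. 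Summing over $k \neq l$ collapses the Kronecker part into a squared exponential sum with a diagonal correction:
\begin{align*}
\sum_{k=1}^N \sum_{l \neq k} e(r(k-l)\alpha) = \Bigl|\sum_{k=1}^N e(rk\alpha)\Bigr|^2 - N.
\end{align*}

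Next I would invoke Lemma~\ref{lem:upperbound} in dimension one (using $|\sum e(-rk\alpha)|=|\sum e(rk\alpha)|$ to handle negative $r$), but with the parameter $\delta/2$ in place of $\delta$. Squaring the resulting bound yields, for every $r \in \mathbb{Z}\setminus\{0\}$,
\begin{align*}
\Bigl|\sum_{k=1}^N e(rk\alpha)\Bigr|^2 \leq C_{\alpha,\delta/2}^{2}\,|r|^{1-\delta}\,N^{1+\delta}.
\end{align*}
This is the place where the badly approximable hypothesis is decisive: it turns each inner block into a product of a power of $|r|$ and a power of $N$ whose exponents we can afford.

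Finally, combining these ingredients and using $\sin^2(r\pi\varepsilon)/(r\pi\varepsilon)^2 \leq 1/(\pi\varepsilon r)^2$, the quantity to estimate becomes
\begin{align*}
\sum_{r\in\mathbb{Z}\setminus\{0\}} \frac{\sin^2(r\pi\varepsilon)}{(r\pi\varepsilon)^2}\bigl(C_{\alpha,\delta/2}^{2}\,|r|^{1-\delta}N^{1+\delta} + N\bigr) \leq \frac{2C_{\alpha,\delta/2}^{2}\,N^{1+\delta}}{(\pi\varepsilon)^{2}}\sum_{r=1}^{\infty}\frac{1}{r^{1+\delta}} + \frac{2N}{(\pi\varepsilon)^{2}}\sum_{r=1}^{\infty}\frac{1}{r^{2}},
\end{align*}
and both series converge: the first because $\delta>0$, the second as a standard $p$-series. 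Collecting all $\alpha,\delta,\varepsilon$-dependent factors into a single constant $C_{\alpha,\delta,\varepsilon}$ yields the bound $C_{\alpha,\delta,\varepsilon}N^{1+\delta}$ claimed. The only real obstacle is calibrating the exponents: if one naively applied Lemma~\ref{lem:upperbound} with the same $\delta$, the $r$-sum would become $\sum 1/r^{1+2\delta-1}=\sum 1/r^{2\delta}$, which need not converge for small $\delta$. Using $\delta/2$ in Lemma~\ref{lem:upperbound} leaves precisely enough decay in $r$ after squaring to make the tail summable, at the cost only of the intended $N^{1+\delta}$ factor.
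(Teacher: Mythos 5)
Your proof is correct and follows essentially the same route as the paper: factor the expectation into the $\sin^2(r\pi\varepsilon)/(r\pi\varepsilon)^2$ term of Remark~\ref{rem:doublesequence} times the Kronecker phase, bound the resulting squared exponential sum via Lemma~\ref{lem:upperbound}, and sum over $r$ using the quadratic decay to obtain a $\zeta(1+\delta)$-type constant. You are in fact slightly more explicit than the paper, which silently uses the calibration with $\delta/2$ (its displayed bound $r^{1-\delta}N^{1+\delta}$ is exactly the square of Lemma~\ref{lem:upperbound} with parameter $\delta/2$) and does not spell out the diagonal correction $-N$ or the treatment of negative $r$.
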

\begin{proof}
By Remark~\ref{rem:doublesequence}, Lemma~\ref{lem:upperbound} and the fact that $\sum_{k=1}^\infty \tfrac{1}{r^{1+\delta}}$ is a convergent series, we obtain 
\begin{align*}
    &  \sum_{r \in \mathbb{Z} \setminus \{ 0 \}} \left| \sum_{k=1}^N \sum_{\substack{l=1,\\ l \neq k}}^N E[e(r(\varepsilon(X_k-X_l) + k\alpha - l \alpha))]\right|\\
    & \leq \tilde{C}_{\alpha,\delta} \sum_{r=1}^\infty \left| \frac{2}{\pi^2 r^2 \varepsilon^2} \sin(\pi r \varepsilon)^2 N^{1+\delta} r^{1-\delta}\right|\\
    & \leq \tilde{C}_{\alpha,\delta,\varepsilon} N^{1+\delta} \sum_{r=1}^\infty \left| \frac{\sin(\pi r \varepsilon)}{r^{1+\delta}} \right|\\
    & \leq \tilde{C}_{\alpha,\delta,\varepsilon} \zeta(1+\delta) N^{1+\delta},
    \end{align*}
where $\zeta(\cdot)$ denotes the Riemann zeta function. The assertion follows with $C_{\alpha,\delta,\varepsilon} = \tilde{C}_{\alpha,\delta,\varepsilon} \zeta(1+\delta)$.
\end{proof}
Finally, for the cases in Lemma~\ref{lem:integral}, where two of the variables $X_k,X_l,X_m,X_n$ coincide, we need to use a another one-dimensional bound on a certain series which is in our view also of independent interest.
\begin{lemma} \label{lem:shortsums} For all $r' \in \mathbb{Z} \setminus \{0 \}$ and all $0 \leq \sigma < 1$ we have
$$\sum_{\substack{r\in \mathbb{N},\\ r \neq \pm r'}} \left| \frac{1}{r} \right|^\sigma \frac{1}{|r+r'|^2} \leq (2+3\zeta(2)) \frac{1}{|r'|^\sigma}.$$
\end{lemma}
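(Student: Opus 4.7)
The plan is to estimate the sum separately depending on the sign of $r'$, since the position of the singularity of $(r+r')^{-2}$ changes. Writing $q := |r'|$, the constraint $r \in \mathbb{N}$ together with $r \neq \pm r'$ simplifies in both cases to $r \neq q$, because the other excluded value is negative and hence outside $\mathbb{N}$. The right-hand side $(2+3\zeta(2))/|r'|^\sigma$ is symmetric in the sign of $r'$, so it suffices to obtain the stated bound in each of the two cases.

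If $r' > 0$, then the sum reads $\sum_{r \neq q} \frac{1}{r^\sigma (r+q)^2}$ and no cancellation issues arise. I would split at $r = q$: for $r \leq q$, using $(r+q)^2 \geq q^2$ and $r^\sigma \geq 1$ yields a contribution bounded by $1/q$; for $r > q$, using $r^\sigma \geq q^\sigma$ together with $\sum_{r > q}(r+q)^{-2} \leq \sum_{m > q} m^{-2} \leq 1/q$ gives a contribution bounded by $1/q^{1+\sigma}$. Since $q \geq 1$ and $\sigma \leq 1$, both terms are at most $1/q^\sigma$, so this case is comfortably within the stated constant.

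The harder case, which I expect to be the main obstacle, is $r' < 0$, where the sum becomes $\sum_{r \neq q} \frac{1}{r^\sigma (r-q)^2}$ and the denominator is small precisely when $r$ approaches $q$. I would partition the admissible indices into three ranges: (i) $1 \leq r \leq q/2$, (ii) $q/2 < r < q$, and (iii) $r > q$. On range (i), bounding $r^\sigma \geq 1$ and $(q-r)^2 \geq q^2/4$ yields a contribution of at most $(4/q^2)\lfloor q/2\rfloor \leq 2/q$. On range (ii), bounding $r^\sigma \geq (q/2)^\sigma \geq q^\sigma/2$ (valid since $\sigma \leq 1$) and substituting $s = q-r$ so that the remaining sum becomes $\sum_{s=1}^{\lfloor q/2\rfloor} s^{-2} \leq \zeta(2)$ gives at most $2\zeta(2)/q^\sigma$. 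On range (iii), using $r^\sigma \geq q^\sigma$ and $\sum_{r>q}(r-q)^{-2} = \zeta(2)$ gives at most $\zeta(2)/q^\sigma$. Adding the three contributions (and using $2/q \leq 2/q^\sigma$) yields exactly $(2+3\zeta(2))/q^\sigma$, matching the stated constant; this same bound trivially dominates the easier $r'>0$ case.

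The technical core of the argument lies in range (ii), where one must simultaneously extract a factor of $q^\sigma$ in the denominator and retain summability of $(q-r)^{-2}$ after the substitution $s = q-r$. Without the split at $q/2$, neither of the two natural estimates suffices: the estimate $r^\sigma \geq 1$ alone loses the required $q^\sigma$ factor, while $r^\sigma \geq q^\sigma$ fails for small $r$. The three-way split is precisely calibrated so that both estimates are applied exactly where they are sharp, and the inequality $(q/2)^\sigma \geq q^\sigma/2$ (which relies on $\sigma \leq 1$) is the only place where the restriction on $\sigma$ is used.
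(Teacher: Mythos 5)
Your proof is correct and follows essentially the same route as the paper: after extracting the factor $|r'|^\sigma$, the paper performs the identical three-way split at $|r'|/2$ and $|r'|$ (handled uniformly in the sign of $r'$ rather than by cases), bounding the three ranges by $2$, $2^\sigma\zeta(2)$ and $\zeta(2)$ respectively, exactly as in your treatment of the $r'<0$ case. Your separate, simpler handling of the $r'>0$ case is only a cosmetic difference.
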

\begin{proof} We need to show that 
$$\sum_{\substack{r\in \mathbb{N}\\ r \neq \pm r'}} \left| \frac{r'}{r} \right|^\sigma \frac{1}{|r+r'|^2}$$
is uniformly bounded. At first we split the sum into three parts
$$\sum_{\substack{r < |r`|/2,\\ r \neq r'}} \left| \frac{r'}{r} \right|^\sigma \frac{1}{|r + r'|^2} + \sum_{\substack{|r`|/2 \leq r < |r'|,\\ r \neq r'}} \left| \frac{r'}{r} \right|^\sigma \frac{1}{|r + r'|^2} + \sum_{\substack{r \geq |r`|,\\ r \neq r'}} \left| \frac{r'}{r} \right|^\sigma \frac{1}{|r + r'|^2} 
$$
The first sum consists of less than $|r'|/2$ terms of size at most $|r'|^\sigma / |r'/2|^2$ and is thus strictly bounded by $2$. The first factor in the second sum is at most $2^\sigma$ and thus the sum can be bounded by $2\zeta(2)$. Finally, for the third sum, the factor $\left| \frac{r'}{r} \right|$ is bounded by $1$ and therefore this sum can be bounded by $\zeta(2)$.
\end{proof}

\section{Sums of non-Poissonian Sequences} \label{sec:sums}

In this section we turn to the proof of Theorem~\ref{thm:main} that uses the intermediate result from Proposition~\ref{prop:expected} that does hold for arbitrary multi-dimensional low-discrepancy sequences. The structure of the proof of our main result is essentially similar to the one of Theorem~2.3 in \cite{Mar07} and Theorem 1 in \cite{HKL19}. However, we face additional technical challenges because the sequence under consideration has a deterministic and a stochastic part with a comparably complicated expected value, see Lemma~\ref{lem:integral}.\\[12pt]
A main step within the proof of Theorem~\ref{thm:main} is to apply Chebyshev's inequality in order to obtain convergence in probability. Therefore, Proposition~\ref{prop:expected} gives an estimate on the expected value of the pair correlation function first. We now come to its proof.
\begin{proof}[Proof of Proposition~\ref{prop:expected}]
By definition we have
\begin{align*}
  & \mathbb{E}\left(F_{N,d}(s)\right) =\\
  & \mathbb{E}\left( \frac{1}{N} \# \left\{ 1 \leq l \neq m \leq N \ : \ \norm{\vec{z}_l - \vec{z}_m + \varepsilon \cdot (\vec{X}_l - \vec{X}_m)} _{\infty} \leq \frac{s}{N^{1/d}} \right\} \right).
\end{align*}
If we regard the expression $\varepsilon (\vec{X}_l-\vec{X}_m)$ as the difference of two independent, uniformly distributed random variables on $[0,\varepsilon]^{d}$. Recall that for arbitrary measurable sets $A,B$, the convolution of their characteristic functions is given by $\mathds{1}_A * \mathds{1}_B(x) = \lambda_d(B\cap(x-A))$. Thus, the density of  $\varepsilon (\vec{X}_l-\vec{X}_m)$ equals
$$f(\vec{x}) = \frac{1}{\varepsilon^{d}} \prod_{i=1}^d \left( 1 - \left| \frac{x_i}{\varepsilon} \right| \ \right) \mathds{1}_{[-\varepsilon,\varepsilon]^{d}}(\vec{x}).$$
For fixed $1 \leq l \leq N$ we consider the expected value of
$$F_{N,d}^l(s) := \frac{1}{N} \# \left\{ 1 \leq m \leq N \ : \ m \neq l, \norm{\vec{z}_l - \vec{z}_m + \varepsilon \cdot (\vec{X}_l - \vec{X}_m) }_{\infty} \leq \frac{s}{N^{1/d}} \right\}.$$
Denote the $i$-th component of $\vec{z}_m$ by $z_m^{(i)}$ and likewise for $\vec{z}_l$. Then by the definition of the norm $\norm{\cdot}_{\infty}$ we obtain,\footnote{Note that the corresponding proof in \cite{HKL19} uses the fact that $\varepsilon =1$ when calculating the expected value. Therefore, the integration bounds only need to be considered modulo 1. This explains the technical difference between their proof and our.}
\begin{align*}
& \mathbb{E}(F_{N,d}^l(s)) = \\ & \frac{1}{N} \sum_{\substack{m \neq l,\\ \norm{z_m - z_l}_{\infty} \leq \frac{s}{N^{1/d}} + \varepsilon}}  \int_{z_m^{(1)}-z_l^{(1)}-s/N^{1/d}}^{z_m^{(1)}-z_l^{(1)}+s/N^{1/d}} \ldots \int_{z_m^{(d)}-z_l^{(d)}-s/N^{{1/d}}}^{z_m^{(d)}-z_l^{(d)}+s/N^{1/d}} f(x) \mathrm{d} x_1 \ldots \mathrm{d} x_d 
\end{align*}
The $d$-fold integral is equal to
\begin{align*}
& \left(\frac{-1}{2}\right)^d  \prod_{i=1}^d \left[ \left( 1 - \frac{z_m^{(i)}-z_l^{(i)}+s/N^{1/d}}{\varepsilon} \right)^2 - \left( 1 - \frac{z_m^{(i)}-z_l^{(i)}-s/N^{1/d}}{\varepsilon} \right)^2\right]\\
& = \frac{(2s)^d}{\varepsilon^d} \cdot \frac{1}{N} \prod_{i=1}^d \left(1 - \frac{z_m^{(i)}-z_l^{(i)}}{\varepsilon} \right)  
\end{align*}
Let
$$I_N:= \frac{1}{N} \sum_{\substack{m \neq l,\\ \norm{z_m - z_l}_{\infty} \leq \frac{s}{N^{1/d}} + \varepsilon}} \prod_{i=1}^d \left(1 - \frac{z_m^{(i)}-z_l^{(i)}}{\varepsilon} \right)$$
By the Koksma-Hlawka inequality, see e.g. \cite{Nie92}, Theorem~2.9, we have
$$\left| I_N - \int_{\norm{z_m-z_l}_{\infty} \leq \varepsilon + s/N^{1/d}} \prod_{i=1}^d \left(1 - \frac{x^{(i)}-z_l^{(i)}}{\varepsilon} \right) \mathrm{d} x\right| \leq C \cdot D_N(z_n)$$
with $C$ being a constant independent of $N$. Again by exploiting the fact that we have $\norm{\cdot}_{\infty}$ as norm the integral can be calculated component per component as
$$\left| I_N - \varepsilon^{d} \left( 1- \frac{s^2}{\varepsilon^2 N^2}\right)^d \right| \leq C \cdot D_N(z_n),$$
which is (very importantly) independent of $l$. Since $(\vec{z}_n)_{n \in \mathbb{N}}$ is a low-discrepancy sequence we can hence in summary deduce
\begin{align*}
\mathbb{E}(F_{N,d}(s)) & = \frac{1}{N} \cdot N \cdot \frac{(2s)^d}{\varepsilon^d} \cdot \varepsilon^d \cdot \left( \left(1 - \frac{s^2}{\varepsilon^2 N^2}\right)^d + \mathcal{O}\left( \frac{\log(N)}{N} \right)\right),
\end{align*}
which has limit $(2s)^d$ for $N \to \infty$, as claimed.
\end{proof}
All the preparatory results have now paved the way to the proof of Theorem~\ref{thm:main}.
\begin{proof}[Proof of Theorem~\ref{thm:main}]
As we want to apply Chebychev's inequality, we need to calculate the variance of $F_N(s)$. We closely follow here the approach in \cite{Mar07} although the technical details of our arguments are a lot more involved. Using the Poisson summation formula we write
\begin{align*}
F_{N.d}(s) & = \frac{1}{N} \sum_{1 \leq k \neq l \leq N} \sum_{\vec{q} \in \mathbb{Z}^{d}} I\left( \frac{(\varepsilon(\vec{X}_k-\vec{X}_l) + \left\{(k-l)\vec{\alpha}\right\}+\vec{q})N^{1/d}}{2s}\right)\\
& = \frac{(2s)^{d}}{N^2} \sum_{1 \leq k \neq l \leq N} \sum_{\vec{r} \in \mathbb{Z}^{d}}
\mathcal{F}I \left( \frac{2s\varepsilon\vec{r}}{N^{1/d}} \right) e(\langle{\vec{r}},(\varepsilon(\vec{X}_k-\vec{X}_l)+\left\{(k-l)\vec{\alpha}\right\})\rangle),
\end{align*}
where $\mathcal{F}I(\xi) = \prod_{i=1}^d \frac{sin(\pi \xi_i)}{\pi \xi_i}$ for $\xi_i \neq 0$ and $\mathcal{F}I(\vec{0})=1$ else is the Fourier transform of the indicator function $I(\cdot)$ of the interval $[-1/2,1/2]^{d}$. Thus, it follows analogously to \cite{HKL19} that
\begin{align*}
    & \mathbb{E}\left[ \left(F_{N.d}(s) - \mathbb{E}(F_{N.d}(s))  \right)^2 \right]\\
    & = \frac{(2s)^{2d}}{N^4} \sum_{\vec{r},\vec{r'} \in \mathbb{Z}^{d} \setminus \{\vec{0}\}} \sum_{\substack{1\leq k,l,m,n\leq N,\\ k \neq l, m \neq n}}  \mathcal{F}I \left( \frac{2s\varepsilon\vec{r}}{N^{1/d}} \right) \mathcal{F}I \left( \frac{2s\varepsilon\vec{r'}}{N^{1/d}} \right)\\
    & \qquad \times \mathbb{E}[e(\langle \vec{r}, (\varepsilon(\vec{X}_k-\vec{X}_l) + \{k\vec{\alpha}\} - \{l\vec{\alpha}\})\rangle +  \langle \vec{r'}, (\varepsilon(\vec{X}_m-\vec{X}_n) + \{m\vec{\alpha}\} - \{n\vec{\alpha}\})\rangle)]\\
     & = \frac{(2s)^{2d}}{N^4} \sum_{\vec{r},\vec{r'} \in \mathbb{Z}^{d} \setminus \{\vec{0}}\} \sum_{\substack{1\leq k,l,m,n\leq N,\\ k \neq l, m \neq n}}  \mathcal{F}I \left( \frac{2s\varepsilon\vec{r}}{N^{1/d}} \right) \mathcal{F}I \left( \frac{2s\varepsilon\vec{r'}}{N^{1/d}} \right)\\
    & \qquad \times \mathbb{E}[e(\langle \vec{r}, \varepsilon(\vec{X}_k-\vec{X}_l)\rangle +  \langle \vec{r'}, \varepsilon(\vec{X}_m-\vec{X}_n)\rangle)]\\
    & \qquad \times e(\langle \vec{r},(\{k\vec{\alpha}\} - \{l\vec{\alpha}\})\rangle +  \langle\vec{r'}, (\{m\vec{\alpha}\} - \{n\vec{\alpha}\})\rangle).
\end{align*}
If $\varepsilon \in \mathbb{Z}$ then the proof is finished because $\{k \vec{\alpha} + \varepsilon \vec{X}_{k} \}_{k \in \mathbb{N}}$ would be uniformly distributed on $[0,1]^d$ and the proof in \cite{HKL19} applies. Therefore, we may without loss of generality assume that $\varepsilon \notin \mathbb{Z}$. We then split up the sum into several parts. According to Lemma~\ref{lem:integral}, there are two main cases to distinguish.\\[12pt]
\underline{1. Case $r_i \neq \pm r'_i$ for all $1 \leq i \leq d$:}\\[12pt]
For $r_i \neq \pm r'_i$ for all $1 \leq i \leq d$ we split up the sum into the cases which occur in Lemma~\ref{lem:integral}. At first, we consider the case $k=m, l=n$, i.e. the sum
\begin{align*}
\frac{(2s)^{2d}}{N^4} & \sum_{\substack{\vec{r},\vec{r'} \in \mathbb{Z}^{d} \setminus \{\vec{0}\}\\ r_i \neq r_i'}}  \sum_{\substack{1\leq k,l,m,n\leq N,\\ m = k \neq l = n}}  \mathcal{F}I \left( \frac{2s\varepsilon\vec{r}}{N^{1/d}} \right) \mathcal{F}I \left( \frac{2s\varepsilon\vec{r'}}{N^{1/d}} \right)\\
    & \qquad \times \mathbb{E}[e(\langle\vec{r}, \varepsilon(\vec{X}_k-\vec{X}_l) \rangle+  \langle \vec{r'}, \varepsilon(\vec{X}_m-\vec{X}_n)\rangle)]\\
    & \qquad \times e(\langle\vec{r},(\{k\vec{\alpha}\} - \{l\vec{\alpha}\})\rangle +  \langle \vec{r'}, (\{m\vec{\alpha}\} - \{n\vec{\alpha}\})\rangle).
\end{align*}
We now focus the inner sum over $k=m, l=n$. By Lemma~\ref{lem:integral}, we know that the expected value is independent of the explicit values of $k,l,m$ and $n$. Therefore, we can apply Lemma~\ref{lem:upperbound} and bound the part stemming from the two Kronecker sequences by $C N^{3/2} |r_1|^{1/4} |r_1'|^{1/4}$ because $\min_{i \leq 1 \leq d} r_i \leq r_1$. If we furthermore write out the functions $\mathcal{F}I(\cdot)$, which are both independent of $k,l,m$ and $n$, we obtain by Lemma~\ref{lem:integral} for the inner sum  
\begin{align*}
    & = \left|CN^{7/2} |r_1|^{1/4} |r_1'|^{1/4} \prod_{i=1}^d \frac{\sin\left( \frac{2\pi s r_i\varepsilon}{N} \right)\sin\left( \frac{2\pi sr_i'\varepsilon}{N} \right)}{2\pi^2 sr_i\varepsilon 2\pi^2 sr'_i\varepsilon} \frac{\sin\left((r_i+r'_i)\pi \varepsilon)^2 \right)}{\pi^2 \varepsilon^2 (r_i+r'_i)^2}\right|\\
    & \leq C N^{7/2} \frac{1}{|r_1'|^{3/4}} \frac{1}{|r_1|^{3/4}}  \frac{1}{|r_1+r_1'|^2} \prod_{i=2}^d\frac{1}{|r'_i|} \frac{1}{|r_i|}  \frac{1}{|r_i+r_i'|^2}
\end{align*}
We can now apply at first Lemma~\ref{lem:shortsums} to bound the sum over $r_1$ by $\frac{C}{|r_1'|^{3/4}}$ and then we can take the sum over $r'_1$. Afterwards we sum over the remaining $r_j$ and $r'_j$. Including the factor $\frac{(2s)^{2d}}{N^4}$ we achieve a bound of $C N^{-1/2}$ for the complete sum under consideration.\\[12pt]
Next we turn to the case $k \neq m,n$ and $l \neq m,n$. Again by Lemma~\ref{lem:integral} the expected value is independent of $k,l,m,n$ and the sum reduces to
\begin{align*}
\frac{(2s)^{2d}}{N^4} & \sum_{\substack{\vec{r},\vec{r}' \in \mathbb{Z}^{d} \setminus \{\vec{0}\}\\ r_i \neq \pm r'_i}} \sum_{\substack{1\leq k,l,m,n\leq N,\\ \#\{k,l,m,n\} =  4}}  \prod_{i=1}^d \mathcal{F}I \left( \frac{2sr_i\varepsilon}{N^{1/d}} \right) \mathcal{F}I \left( \frac{2sr'_i\varepsilon}{N^{1/d}} \right) \frac{\sin(r_i\pi \varepsilon)^2}{r_i^2 \pi^2 \varepsilon^2}  \frac{\sin(r_i'\pi \varepsilon)^2}{r_i^{'2} \pi^2 \varepsilon^2}  \\
    & \times e(\langle \vec{r},(\{k\vec{\alpha}\} - \{l\vec{\alpha}\})\rangle +\langle\vec{r'}, (\{m\vec{\alpha}\} - \{n\vec{\alpha}\})\rangle).
\end{align*}
In the inner sum, only the parts stemming from the Kronecker sequence depend on $k,l,m$ and $n$ and by Lemma~\ref{lem:upperbound} the entire product can be bounded by $C \cdot N^3 \cdot |r_1|^{1/2}|r_1'|^{1/2}$ because $\# \{k,l,m,n\} = 4$. Therefore, by Lemma~\ref{lem:shortsums} we obtain
\begin{align*}
\frac{(2s)^{2d}}{N^4} &  \sum_{\substack{\vec{r},\vec{r}' \in \mathbb{Z}^{d} \setminus \{\vec{0}\}\\ r_i \neq \pm r'_i}} \sum_{\substack{1\leq k,l,m,n\leq N,\\ \#\{k,l,m,n\} =  4}}  \left| \prod_{i=1}^d \mathcal{F}I \left( \frac{2sr_i\varepsilon}{N^{1/d}} \right) \mathcal{F}I \left( \frac{2sr'_i\varepsilon}{N^{1/d}} \right) \frac{\sin(r_i\pi \varepsilon)^2}{r_i^2 \pi^2 \varepsilon^2}  \frac{\sin(r_i'\pi \varepsilon)^2}{r_i^{'2} \pi^2 \varepsilon^2}  \right.\\
    & \qquad \times e(\langle \vec{r},(\{k\vec{\alpha}\} - \{l\vec{\alpha}\})\rangle +  \langle\vec{r'}, (\{m\vec{\alpha}\} - \{n\vec{\alpha}\})\rangle) \bigg\rvert \\
\leq & \tilde{C} \zeta\left( \frac{3}{2} \right)^2  \frac{1}{N}
\end{align*}
All the remaining cases in Lemma~\ref{lem:integral} can be treated by combining the arguments of the two cases which we discussed here in detail. Summing up, it follows that
\begin{align*}
\frac{(2s)^{2d}}{N^4} & \sum_{\substack{\vec{r},\vec{r'} \in \mathbb{Z}^{d} \setminus \{\vec{0}\}\\ r_i \neq r_i'}}  \sum_{\substack{1\leq k,l,m,n\leq N,\\ k \neq l, m \neq n}}  \mathcal{F}I \left( \frac{2s\varepsilon\vec{r}}{N^{1/d}} \right) \mathcal{F}I \left( \frac{2s\varepsilon\vec{r'}}{N^{1/d}} \right)\\
    & \qquad \times \mathbb{E}[e(\langle \vec{r}, \varepsilon(\vec{X}_k-\vec{X}_l)\rangle +  \langle \vec{r'}, \varepsilon(\vec{X}_m-\vec{X}_n)\rangle)]\\
    & \qquad \times e(\langle \vec{r},(\{k\vec{\alpha}\} - \{l\vec{\alpha}\})\rangle + \langle \vec{r'}, (\{m\vec{\alpha}\} - \{n\vec{\alpha}\})\rangle)\\
    & = \mathcal{O}\left( \frac{1}{N^{1/2}} \right).
\end{align*}\\[12pt]
\underline{2. Case $r_i = \pm r'_i$ for some indices $1 \leq i \leq d$}:\\[12pt]
In this case we split up the indices into two subsets, namely $I_1:= \{i : r_i \neq \pm r_i'\}$ and $I_2 := \{1,\ldots,d\} \setminus I_1$. Let us rewrite the entire sum as
\begin{align*}
     & \frac{(2s)^{2d}}{N^4} \sum_{(I_1,I_2)} \sum_{\substack{\vec{r},\vec{r'} \in \mathbb{Z}^d \setminus \{\vec{0}\}\\ r_i \neq \pm r_i, i \in I_1\\r_i = \pm r_i, i \in I_2 }} \sum_{\substack{1\leq k,l,m,n\leq N,\\ k \neq l, m \neq n}}  \mathcal{F}I \left( \frac{2s\varepsilon\vec{r}}{N^{1/d}} \right) \mathcal{F}I \left( \frac{2s\varepsilon\vec{r}'}{N^{1/d}} \right)\\
    & \qquad \times \mathbb{E}[e(\langle\vec{r}, \varepsilon(\vec{X}_k-\vec{X}_l)\rangle +  \langle\vec{r'}, \varepsilon(\vec{X}_m-\vec{X}_n)\rangle)]\\
    & \qquad \times e(\langle\vec{r},(\{k\vec{\alpha}\} - \{l\vec{\alpha}\})\rangle +  \langle\vec{r'}, (\{m\vec{\alpha}\} - \{n\vec{\alpha}\})\rangle).
\end{align*}
Note that the number of possible combinations $(I_1,I_2)$ is dependent on $d$ but independent of $N$. This implies that we only have to make sure that the sum under consideration converges for every possible combination $(I_1,I_2)$ but there is no need to count the number of instances such a combination occurs (the number of combinations is part of the constant $C$). In the following, we will show that the order of convergence does not deteriorate from splitting up the indices.\\[12pt]
Now we consider the indices in $I_2$ and split up the inner sum (over $k,l,m,n$). If $r_i=r_i'$ and $k=m, l=n$ or if $r_i=-r_i'$ and $k=n, l=m$ respectively, then the sum over $r_i$ reduces in the case $r_i=r_i'$ to
\begin{align*}
& \sum_{r_i = r_i' \in \mathbb{Z} \setminus \{0\}} \sum_{\substack{1\leq k,l,m,n\leq N,\\ m = k \neq l = n}} \mathcal{F}I \left( \frac{2sr_i\varepsilon}{N^{1/d}} \right) \mathcal{F}I \left( \frac{2sr'_i\varepsilon}{N^{1/d}} \right)\\
    & \qquad \times \mathbb{E}[e(r_i \varepsilon(X_k^{(i)}-X_l^{(i)}) +  r'_i \varepsilon(X_m^{(i)}-X_n^{(i)}))]
\end{align*}
where $X_{\boldsymbol{\cdot}}^{(i)}$ denotes the $i$-th component of the respective random vector (and similarly in the case $r_i = -r_i'$). As the expected value in the expression is equal to $1$ in this case, we see that the sum is of order $\mathcal{O}(N^{2/d})$, i.e. an index in $I_2$ (corresponding to a pair $(r_i,r_i')$) adds an multiplicative factor $N^{2/d}$.\\[12pt]
Moreover, we have seen in the first case that each pair of indices in $I_1$ individually adds a multiplicative factor $N^{2/d}$ as well and that there is a collective factor $N^{3/2}$ for all indices in $I_1$. Summing up, this subsum over $k=m$ and $l=n$ converges like $\mathcal{O}(N^{1/2})$ and so does the subsum over $k=n,l=m$.\\[12pt] 
If $\# \{k,l,m,n\} = 4,$ then the proof from the case $r_i \neq \pm r_i'$ can be applied almost verbatim with the only difference that the double sum over $r_i,r_i'$ reduces to a single sum for indices in $I_1$. By the convergence of $\sum_{r_i \neq 0} \frac{1}{|r_i|^3}$, also here an order of convergence $\frac{1}{N^{2/d}}$ can be achieved for each index in $I_1$. Noticing that also for each index in $I_2$ there is a contribution of $\frac{1}{N^{2/d}}$, the total order of convergence $\mathcal{O}(1/N)$ can be achieved for this subsum.\\[12pt]
If we consider the case $k=m, l \neq n$, then the expected value from Lemma~\ref{lem:integral} for $r_i=-r'_i$ takes the form $\frac{1}{r_i^2\varepsilon^2} \sin(r_i \pi \varepsilon)^2$. By Lemma~\ref{lem:upperbound} and Lemma~\ref{lem:doublesum} (with $\delta = 1/d$ if $d \geq 2$) we see that the indices in $I_1$ yield a collective multiplicative factor of order $N^{2/d}$ and a collective factor $N^{3/2}$. Since the additional $r_i$ from Lemma~\ref{lem:upperbound} has already been taken into account, the indices in $I_1$ (respectively the pairs $(r_i,r_i')$) do not yield a collective factor but only an individual one of order $N^{2/d}$. In total, we end up with a order of convergence $\mathcal{O}\left(\frac{1}{N^{1/2}}\right)$ for this subsum.\\[12pt] 
For all other cases considered in Lemma~\ref{lem:integral}, it follows by a similar argument that the sum has order of convergence $\mathcal{O}\left(\frac{1}{N^{1/2}}\right)$. Therefore, the entire sum is of order $N^{1/2}$.\\[12pt]
This puts us into the position to finally apply Chebyshev's inequality and get for arbitrary $\delta > 0$ that
$$\mathbb{P} \left( \left| F_{N,d}(s) - (2s)^d \right| \geq \delta \right) \leq \frac{C}{\delta^2 N^{\tfrac{1}{2}}}$$
with $C$ independent of $N$, i.e. convergence in probability of $F_{N,d}(s)$ to $(2s)^d$. In order to get almost sure convergence, the remainder of the proof is analogously to the one of Theorem~1 in \cite{HKL19} and the presentation of the arguments is therefore omitted here. 
\end{proof}

\bibliographystyle{alpha}
\bibdata{literatur}
\bibliography{literatur}

\textsc{Rosenheim Technical University of Applied Sciences, Hochschulstra\ss{}e 1, D-83024 Rosenheim,} \texttt{anja.schmiedt@th-rosenheim.de}

\textsc{Ruhr West University of Applied Sciences, Duisburger Str. 100, D-45479 M\"ulheim an der Ruhr,} \texttt{christian.weiss@hs-ruhrwest.de}

\end{document}